\documentclass[11pt,a4paper]{article}

\usepackage{mathrsfs}
\usepackage[all]{xy}
\usepackage[latin1]{inputenc}
\usepackage[english]{babel} \pagestyle{plain}
\usepackage{amsthm}
\usepackage{amssymb}
\usepackage{amsfonts, mathrsfs}
\usepackage{amsmath}
\usepackage[numbers, sort&compress]{natbib}
\usepackage{graphicx}
\usepackage{vmargin, enumerate}
\usepackage{setspace}
\usepackage{paralist}
\usepackage[pdftex]{hyperref}
\setlength{\textwidth}{6.5in}
\setlength{\textheight}{8.4in}
\setlength{\topmargin}{30pt}
\setlength{\headsep}{30pt}
\setlength{\headheight}{0pt}
\setlength{\oddsidemargin}{50pt}
\setlength{\evensidemargin}{0pt}
\linespread{1.1}

%\topmargin0cm \oddsidemargin2.8cm \setlength{\textwidth}{15cm}
%\setlength{\parskip}{5ex}
%\linespread{1.1}

\DeclareMathOperator{\Hom}{Hom}

\begin{document}
\newcommand{\z}{\mathbb{Z}}
\newcommand{\zn}{\mathbb{Z}_{n}}
\newcommand{\zp}{\mathbb{Z}_{p}}
\newcommand{\pf}{\noindent {\bf Proof:} }
\newcommand{\rmk}{\noindent {\bf Remark:} }
\newcommand{\gp}{\Gamma^{+}}
\newcommand{\gm}{\Gamma^{-}}
\newcommand{\gpp}{\Gamma^{++}}
\newcommand{\gmm}{\Gamma^{--}}
\newcommand{\ep}{\varepsilon}
\newcommand{\mac}{\mathcal{C}}
\newcommand{\bmac}{\bar{\mathcal{C}}}
\newcommand{\la}{\lambda}
\newcommand{\ra}{\rightarrow}

\newtheorem{claim}{Claim}
\newtheorem{theorem}{Theorem}[section]
\newtheorem{proposition}[theorem]{Proposition}
\newtheorem{problem}[theorem]{Problem}
\newtheorem{conjecture}[theorem]{Conjecture}
\newtheorem*{definition}{Definition}
\newtheorem*{definitions}{Definitions}
\newtheorem{example}[theorem]{Example}
\newtheorem{corollary}[theorem]{Corollary}
\newtheorem{lemma}[theorem]{Lemma}
\newtheorem*{remark}{Remark}
\newtheorem*{remarks}{Remarks}

\title{Quasi-random oriented graphs}
\author{Simon Griffiths}
\date{}

\author{Simon Griffiths\, \\[3mm]  $\,$\footnotesize{IMPA, Est. Dona Castorina 110, Jardim Bot\^anico, Rio de Janeiro, Brazil}}

{\renewcommand{\thefootnote}{\relax} \footnotetext{The author is supported by CNPq (Proc. 500016/2010-2),
e-mail\,:  \texttt{sgriff-at-impa.br}}}

\maketitle

\vspace{-0.6cm}

\abstract{\small{We show that a number of conditions on oriented graphs, all of which are satisfied with high probability by randomly oriented graphs, are equivalent.  These equivalences are similar to those given by Chung, Graham and Wilson \cite{CGW} in the case of unoriented graphs, and by Chung and Graham \cite{CG} in the case of tournaments.  Indeed, our main theorem extends to the case of a general underlying graph $G$ the main result of \cite{CG} which corresponds to the case that $G$ is complete.

One interesting aspect of these results is that exactly two of the four orientations of a four-cycle can be used for a quasi-randomness condition, i.e., if the number of appearances they make in $D$ is close to the expected number in a random orientation of the same underlying graph, then the same is true for every small oriented graph $H$.}}

\section{Introduction}

The concept of quasi-randomness (also known as pseudo-randomness) of graphs has been well-studied since it was introduced by Thomason \cite{T1},\cite{T2}.  Much work since has focused on demonstrating that various concepts of quasi-randomness are equivalent.  To this aim, Chung, Graham and Wilson \cite{CGW} listed no less than seven equivalent quasi-randomness conditions for graphs.  These included a condition on the number of four-cycles in the graph, conditions on the number of edges between large subsets and a condition on the eigenvalues of the graph.  We call these conditions \emph{quasi-randomness} conditions as, in all these cases, they ask that a given graph parameter (eg., number of four-cycles), is close to the value this parameter takes (with high probability) in a random graph.

A little later Chung and Graham \cite{CG} continued this line of research by proving similar results for tournaments (oriented graphs obtained by orienting a complete graph).  Again the conditions considered ask that a given parameter of the tournament be close to its expected value in a random orientation.  One way to think of the results of Chung and Graham is that the underlying graph has been fixed ($G=K_{n}$) and we are concerned only with the properties of the orientation.  In this article we show that similar results may be proved relative to any fixed underlying graph $G$.

With this concept of quasi-randomness a random orientation $D$ of a complete bipartite graph (for example) is very likely to have strong quasi-randomness properties.  One might argue that such an oriented graph should not be considered quasi-random as it does not contain close to the correct number of all small oriented graphs (for example, oriented triangles).  Presumably someone making this argument would prefer a stronger concept of quasi-randomness that did guarantee a close to correct number of all small oriented graphs.   Stronger quasi-randomness conditions of this type may be easily obtained by combining one of our quasi-randomness conditions with a quasi-randomness condition of Chung, Graham and Wilson \cite{CGW} on the underlying graph.\footnote{In the case that the density is not equal to $1/2$ then refer instead to the version of their result given by Simonovits and S\' os \cite{SS}.}  (For example, if the number of homomorphic copies of Type II four cycles (see diagram on page \pageref{diag}) in an oriented graph $D$ is close to the number of copies in a random orientation of $G(n,1/2)$ (i.e., $\approx n^{4}/32$) then the same is true for all small oriented graphs).\vspace{0.2cm}

We remark that in addition to the extension to tournaments \cite{CG}, Chung and Graham also studied quasi-randomness conditions for set systems \cite{CG2} and hypergraphs \cite{CG3}.

There have been a number of articles in recent years establishing the equivalence of other quasi-randomness conditions for graphs, including \cite{HL}, \cite{S}, \cite{SY}, \cite{SS}, \cite{Y}.  We also mention that Kalyanasundaram and Shapira \cite{KS} have recently established another quasi-randomness condition for tournaments: for every fixed even integer $k \ge 4$, if close to half of the $k$-cycles in a tournament $T$ are even (have an even number of anti-cyclic edges), then $T$ must be quasi-random.  Their result confirms a conjecture of Chung and Graham \cite{CG}, and (as they note) may be extended to setting described here.  For background on quasi-randomness we refer the reader to the survey of Krivelevich and Sudakov \cite{KS} and the informative discussion of quasi-randomness given by Gowers in the introduction of \cite{G}.\vspace{0.2cm}

We now prepare to state the main theorem.  While the statement will not be at all surprising to those familiar with previous work on quasi-randomness we nonetheless require quite a lot of notation.  We introduce some notation immediately, so that Theorem \ref{equiv} may, at least informally, be understood; notation will be given in full in Section \ref{notation} and we encourage the reader to look ahead if any clarification is required.

We begin by considering the four possible orientation of the four cycle.  Consider the four-cycle $\{12,23,34,41\}$.  There are 16 possible orientations of this four-cycle and they fall into four equivalence classes. 

\vspace{0.3cm}
\noindent
\begin{tabular}{l c p{0.5cm} c p{0.5cm}  c p{0.5cm} c} \label{diag}

& 

\xymatrix{ \bullet \ar[r] & \bullet \ar[d] \\  \bullet \ar[u] &  \bullet \ar[l] } 

& &

\xymatrix{ \bullet \ar[r] & \bullet \ar[d] \\  \bullet \ar[u]\ar[r]  &  \bullet } 

& &

\xymatrix{ \bullet \ar[r] & \bullet \\  \bullet \ar[u]\ar[r]  &  \bullet \ar[u] } 

& &

\xymatrix{  \bullet & \bullet \ar[l]\ar[d] \\  \bullet \ar[u]\ar[r]  &  \bullet  } 

\vspace{0.3cm}
\\  

&  Type I & & Type II & & Type III & & Type IV 

\vspace{0.3cm}
\\

Number  & 2 & & 8 & & 4 & & 2 
\vspace{0.2cm}
\\

Proportion  & $\frac{1}{8}$ & & $\frac{1}{2}$ & & $\frac{1}{4}$ & & $\frac{1}{8}$

\end{tabular}\vspace{0.7cm}

The above table includes a count of the number of ways each type can occur as an orientation of our four-cycle, and, dividing by 16, the proportion of orientations of our four-cycle which result in a cycle of this type.  These proportions then also describe the proportion of each type one would expect to find if one were to orient the edges of a graph at random.  Thus, these proportions are important to our discussion of quasi-randomness.  We count the number of homomorphic copies of cycles of a given type by summing the number of homomorphic copies of each of the oriented four cycles of that type. We write $\hom_{I}(D), \hom_{II}(D), \hom_{III}(D), \hom_{IV}(D)$ for the corresponding counts and $\hom(C_{4},D)$ for the number of homomorphic copies of an unoriented four-cycle -- equivalently the total number of homomorphic copies of the 16 orientations -- so that \begin{equation}\label{homup} \hom(C_{4},D)=\hom_{I}(D)+\hom_{II}(D)+\hom_{III}(D)+\hom_{IV}(D)\, . \end{equation}

We denote by $M$ the adjacency matrix of an oriented graph $D$ and by $\la_1,\dots ,\la_n$ the eigenvalues of $M$ in order of decreasing absolute value.

We shall consider the class of \emph{partially oriented graphs}, i.e., graphs in which some subset of the edges are oriented.  In a partially oriented graph $D$ we denote by $e(D)$ the number of edges of $D$ and by $\vec{e}(D)$ the number of oriented edges.  Let $\mathscr{O}_{k}$ denote the set of oriented graphs on $k$ vertices, and $\mathscr{P}_{k}$ the set of partially oriented graphs on $k$ vertices.  We denote by $\bar{D}$ the underlying graph of a partially oriented graph $D$.  We now state the main theorem.

\begin{theorem}\label{equiv} The following conditions on an oriented graph $D$ are equivalent, in the sense that any one of them, with any positive value of its parameter ($\alpha,\beta,\delta,$...) can be deduced from any other, when the parameter of the latter is taken sufficiently small.  When the same parameter appears in both then the implied equality holds.\vspace{0.4cm}

\noindent
\begin{displaymath}
\begin{array}{l l l} 
\textup{P1} &\quad & \Big| \hom_{IV}(D) - \frac{1}{8}\hom(C_{4},D)\Big| \le \alpha n^{4}\vspace{0.4cm}
\\
\textup{P2} & & \Big| \hom_{II}(D)-\frac{1}{2}\hom(C_{4},D)\Big| \le \frac{\delta}{2} n^{4}\vspace{0.4cm}
\\
\textup{P3} & & \Big| \hom(H,D)- 2^{-e(H)}\hom(\bar{H},D) \Big |\le \beta n^{k} \qquad\text{ for all $H\in \mathscr{O}_{k}$} \quad (\forall k\in\mathbb{N}) \vspace{0.4cm}
\\
\textup{P4} & & \Big| \hom(H,D)- 2^{-\vec{e}(H)}\hom(\bar{H},D) \Big |\le \eta n^{k} \qquad\text{ for all $H\in \mathscr{P}_{k}$} \quad (\forall k\in\mathbb{N}) \vspace{0.4cm}
\\
\textup{P5} & & \sum_{x,x',y,y'\in V} M_{xy}M_{xy'}M_{x'y}M_{x'y'} \le \delta n^{4} \vspace{0.4cm}
\\
\textup{P6} & & \vec{e}(A,B) - \vec{e}(B,A)\le \gamma n^{2} \qquad \qquad\qquad\qquad\text{ for all } A,B\subset V \vspace{0.4cm}
\\
\textup{P7} & & bias_{1-\ep}(D)\le \ep n^{2}\vspace{0.4cm}
\\
\textup{P8} & & \sum_{i=1}^{n}\la_{i}^{4}\le \delta n^{4} \vspace{0.4cm}
\\
\textup{P9} & & |\la_{1}|\le \zeta n\vspace{0.1cm}
\end{array}
\end{displaymath}\end{theorem}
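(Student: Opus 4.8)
\emph{Proof plan.} The plan is to prove a strongly connected web of implications among P1--P9; since the implications compose and each parameter may be chosen freely, it suffices to verify enough arrows that every condition follows from every other. I would organise this into three parts: (i) an algebraic core of near-tautological equivalences among P1, P2, P5, P8, P9; (ii) the discrepancy--spectral equivalences P6 $\Leftrightarrow$ P7 $\Leftrightarrow$ P9; and (iii) a counting lemma giving P8 $\Rightarrow$ P4, from which P3, and then (by specialisation) P1 and P2, follow, closing every cycle.

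For (i), let $A$ be the adjacency matrix of $\bar D$ and $P$ the out-matrix ($P_{xy}=1$ iff $x\to y$), so that $M=P-P^{T}$ is skew-symmetric, $A=P+P^{T}$, $P=\tfrac12(A+M)$, and $\hom(C_4,D)=\operatorname{tr}(A^{4})$. The quantity in P5 is $\sum_{x,x',y,y'}M_{xy}M_{xy'}M_{x'y}M_{x'y'}=\|MM^{T}\|_{F}^{2}=\operatorname{tr}(MM^{T}MM^{T})=\operatorname{tr}(M^{4})=\sum_i\la_i^{4}$ since $M^{T}=-M$, so P5 $\equiv$ P8. Assigning to each homomorphic copy of $C_4$ the sign $(-1)^{\#\{\text{edges oriented against a fixed rotational direction}\}}$, a short case check shows that Types I, III, IV all receive sign $+1$ and Type II receives $-1$, whence $\operatorname{tr}(M^{4})=\hom(C_4,D)-2\hom_{II}(D)$ and therefore $\bigl|\hom_{II}(D)-\tfrac12\hom(C_4,D)\bigr|=\tfrac12\sum_i\la_i^{4}$; this is P2 $\equiv$ P5 $\equiv$ P8 with a shared parameter. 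The crucial identity for P1 is obtained by expanding $16\operatorname{tr}((PP^{T})^{2})$ via $4PP^{T}=A^{2}-M^{2}+(MA-AM)$: by cyclicity of the trace and $M^{T}=-M$ the two cross-terms involving $MA-AM$ vanish and $-2\operatorname{tr}(A^{2}M^{2})=2\|AM\|_{F}^{2}$, so that
\[
8\hom_{IV}(D)-\hom(C_4,D)=\operatorname{tr}(M^{4})+\|MA-AM\|_{F}^{2}+2\|AM\|_{F}^{2}\ \ge\ \sum_i\la_i^{4}\ \ge\ 0,
\]
where $\hom_{IV}(D)=2\operatorname{tr}((PP^{T})^{2})$ comes from writing one Type IV orientation's count as $\sum P_{x_1x_2}P_{x_3x_2}P_{x_3x_4}P_{x_1x_4}=\operatorname{tr}((PP^{T})^{2})$ and the other as its transpose. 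Hence $\hom_{IV}(D)\ge\tfrac18\hom(C_4,D)$ always, and P1 with parameter $\alpha$ forces $\sum_i\la_i^{4}\le 8\alpha n^{4}$, i.e.\ P8. (This nonnegativity is exactly what picks out Types II and IV and not I, III.) Finally P8 $\Leftrightarrow$ P9 is elementary: $|\la_1|^{4}\le\sum_i\la_i^{4}=\sum_i|\la_i|^{4}\le|\la_1|^{2}\sum_i|\la_i|^{2}=|\la_1|^{2}\operatorname{tr}(MM^{T})\le|\la_1|^{2}n^{2}$.

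For (ii), P6 $\Leftrightarrow$ P7 should follow by unwinding the definition of $bias_{1-\ep}$ (a weighted/dyadic version of set discrepancy, together with the routine reduction of discrepancy over all pairs of sets to discrepancy over large pairs). The implication P9 $\Rightarrow$ P6 is the oriented expander mixing lemma: up to an $O(n)$ correction for overlaps, $\vec e(A,B)-\vec e(B,A)=\mathbf 1_A^{T}M\mathbf 1_B$, and since the singular values of a real skew-symmetric matrix are the numbers $|\la_i|$, this is $\le|\la_1|\sqrt{|A||B|}\le\zeta n^{2}$. For the converse P6 $\Rightarrow$ P9 I would run the standard converse-mixing argument: choose unit vectors $u,v$ with $u^{T}Mv=\|M\|_{\mathrm{op}}=|\la_1|$, split each into positive and negative parts and each part into $O(\log n)$ dyadic level sets (discarding coordinates below $1/n$, whose total contribution is negligible), so that some pair of level sets $A,B$ achieves $|\vec e(A,B)-\vec e(B,A)|\gtrsim|\la_1|n$, contradicting P6 unless $|\la_1|$ is small; thus P6 $\Rightarrow$ P9 $\Rightarrow$ P8 as well.

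For (iii), assume P8, equivalently $\|M\|_{\mathrm{op}}\le\zeta n$. For $H\in\mathscr P_k$, substitute $P=\tfrac12 A+\tfrac12 M$ (and $P^{T}=\tfrac12 A-\tfrac12 M$) at every oriented edge and expand $\hom(H,D)=2^{-\vec e(H)}\sum_{S}\hom_S$, the sum over subsets $S$ of the oriented edges of $H$, where $\hom_S$ weights the edges in $S$ by $M$ and all other edges of $\bar H$ by $A$; the term $S=\emptyset$ is $2^{-\vec e(H)}\hom(\bar H,D)$, and for $S\ne\emptyset$, fixing an edge $u_0v_0\in S$ and summing over $\phi(u_0),\phi(v_0)$ last writes $\hom_S$ as a sum over the remaining $k-2$ vertices of bounded multiples of bilinear forms $g^{T}Mh$ with $\|g\|_2,\|h\|_2\le\sqrt n$ (every other matrix entry being bounded by $1$), so $|\hom_S|\le\|M\|_{\mathrm{op}}\,n\cdot n^{k-2}\le\zeta n^{k}$; summing over the $<2^{k^{2}}$ sets $S$ gives P4 with $\eta=2^{k^{2}}\zeta$. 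Restricting to fully oriented $H$ gives P3, and summing P3 over the two Type IV (resp.\ eight Type II) orientations of the four-cycle recovers P1 (resp.\ P2), so every loop is closed. I expect the main obstacles to be the careful verification of the algebraic identity in (i) — in particular the nonnegativity that forces the Type II/Type IV dichotomy — together with driving the error terms in the counting lemma of (iii) and the rounding loss in the converse-mixing step of (ii) down to $o(n^{k})$ and $o(|\la_1|n)$ respectively.
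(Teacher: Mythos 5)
Your algebraic core (part (i)) and the spectral counting lemma P8 $\Rightarrow$ P4 (part (iii)) are both correct and genuinely different from the paper. The paper gets P1 $\Rightarrow$ P2 from a purely combinatorial lemma comparing the four $\hom$-counts via $(s-t)^2\ge 0$, whereas your identity
\[
8\hom_{IV}(D)-\hom(C_4,D)=\operatorname{tr}(M^4)+2\|AM\|_F^2+\|MA-AM\|_F^2
\]
is a cleaner one-line explanation of why Type IV is never under-represented, and it jumps straight to P8. Likewise, the paper proves P6 $\Rightarrow$ P4 by an induction on $\vec e(H)$ (un-orient one edge and use the discrepancy bound as an approximate halving step), whereas your $P=\tfrac12(A+M)$ expansion into $2^{\vec e(H)}$ terms and the operator-norm bound on each non-trivial term is a clean spectral counting lemma. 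Both of those changes are legitimate and instructive.

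The gap is in part (ii), at P6 $\Rightarrow$ P9. Your web makes P6 (and hence P7) depend on this single outgoing arrow, so it is load-bearing. The ``standard converse-mixing argument'' via positive/negative splitting and dyadic level sets does not close, because P6 gives only the flat bound $|\mathbf 1_A^{T}M\mathbf 1_B|\le\gamma n^{2}$, with no dependence on $|A|,|B|$; the Bilu--Linial-type converse requires a hypothesis of the form $\lesssim \alpha\sqrt{|A||B|}$ to control the contribution of the small level sets. Running the decomposition honestly, a level set $A_j$ at scale $2^{-j}$ has $|A_j|\lesssim 4^{j}$, and weighting $\min(\gamma n^{2},|A_j||B_k|)$ over $O(\log n)$ levels in each of $u,v$ produces $|\la_1|\lesssim\sqrt{\gamma}\,n\log n$ (or $|\la_1|\lesssim\gamma n^{2}$ if one does not exploit the level-set sizes at all). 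Either way there is a factor of $\log n$ (or worse) that cannot be absorbed: the theorem requires $\zeta=\zeta(\gamma)\to 0$ as $\gamma\to 0$ \emph{uniformly in $n$}, and no fixed $\gamma$ makes $\sqrt{\gamma}\log n$ small for all $n$. This is precisely why the paper does not attempt a converse mixing lemma; it closes the loop through P6 $\Rightarrow$ P4 by the elementary induction. To repair your plan you could either (a) replace P6 $\Rightarrow$ P9 by the paper's inductive P6 $\Rightarrow$ P4 (your P9 $\Rightarrow$ P6 already gives the other direction), or (b) prove $\operatorname{tr}(M^{4})\le O(\gamma) n^{4}$ from the cut-norm bound directly, e.g.\ by writing $\operatorname{tr}(M^{4})=\sum_{i,j}(M^{2})_{ij}\langle a_i,b_j\rangle$ with $a_i,b_j$ the rows/columns of $M$ and invoking Grothendieck's inequality together with $\|M^{2}\|_{\square}\le 2n\|M\|_{\square}$. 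As written, the dyadic argument does not suffice.

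One much smaller point: in (iii) you assume ``P8, equivalently $\|M\|_{\mathrm{op}}\le\zeta n$''. That is P9, not P8 (they are of course equivalent, but with a fourth-root loss), so the parameter tracking should read P9($\zeta$) $\Rightarrow$ P4($2^{k^{2}}\zeta$) rather than P8.
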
\vspace{0.2cm}

We remind the reader that any notation not already defined will be given in Section \ref{notation}.\vspace{0.2cm}

We remark that our quasi-randomness conditions (P1 -- P9) are not in exact correspondence with those given by Chung and Graham for the case of tournaments \cite{CG}.  For example Condition 11 of Chung and Graham states that: there exists some ordering $\pi$ [of the vertices of $D$] so that [the graph consisting of edges ascending in that ordering] is quasi-random [in the sense of Chung, Graham and Wilson].  To make the equivalent statement in our setting we would be required to define a concept of quasi-randomness for two-colourings of a graph.  This should not be difficult to do and should be largely analogous to the present discussion of quasi-random orientations of graphs, but we choose not to investigate it further here.

The proof of Theorem~\ref{equiv} is very similar in spirit, and often in detail, to the proofs given by Chung, Graham and Wilson~\cite{CGW} in the case of graphs and Chung and Graham \cite{CG} in the case of tournaments.  Indeed, many of our proofs are simply minor alterations of proofs given in these articles.  The author would also like to acknowledge the influence of the 2005 lecture course of Gowers on ``Quasirandomness''.  The proof of Theorem~\ref{equiv} is given in Section~\ref{secproof}.  We first introduce notation in Section \ref{notation} and establish some initial results concerning the various orientations of a four-cycle in Section~\ref{oofc}.

\section{Notation}\label{notation}

As mentioned above, we shall consider \emph{partially oriented graphs}, graphs in which some subset of the edges are oriented.  In a partially oriented graph $D$ both notations $xy\in E(D)$ and $\vec{xy}\in E(D)$ occur, the former being used in the case that the edge $xy$ (either oriented or unoriented) is present in $D$ and the latter indicating that this edge is oriented from $x$ to $y$.  Since both (unoriented) graphs and oriented graphs are contained within the category of partially oriented graphs we need only define notation for partial oriented graphs $D$.

Given a partially oriented graph $D$, we write $e(D)$ for the total number of edges of $D$, $\Gamma(x)$ the set of neighbours of a vertex $x$, and $d(x)$ the degree of $x$.  While we write $\vec{e}(D)$ for the number of arcs  (oriented edges) of $D$, $\Gamma^{+}(x)$ and $\Gamma^{-}(x)$ for the out- and in- neighbourhoods of $x$, and $d^{+}(x)$ and $d^{-}(x)$ for the cardinalities $|\Gamma^{+}(x)|$ and $|\Gamma^{-}(x)|$.  Recall that $\bar{D}$ denotes the underlying graph of $D$.

For sets $A,B\subseteq V(D)$ denote by $e(A,B)$ the number of edges between $A$ to $B$, and by $\vec{e}(A,B)$ the number of arcs from $A$ to $B$.

For $x,y\in V(D)$, and a pair $\sigma,\tau \in\{ +,-\}$, we define $\sigma\tau$ joint degree $d^{\sigma \tau}(x,y)=|\Gamma^{\sigma}(x)\cap \Gamma^{\tau}(y)|$.

Given two partially oriented graphs $D$ and $H$, a homomorphism from $H$ to $D$ is a function $f:V(H)\ra V(D)$ satisfying:\vspace{0.1cm}

\begin{tabular}{p{0.5cm} l}
(i) &  $\vec{f(x)f(y)}$ is an arc of $D$ for every arc $\vec{xy}$ of $H$. 
\\
(ii) &  $f(x)f(y)$ is an edge of $\bar{D}$ for every edge $xy$ of $H$.
\end{tabular}\vspace{0.1cm}

We denote by $\hom(H,D)$ the number of homomorphisms from $H$ to $D$, i.e., the number of homomorphic copies of $H$ in $D$.

The adjacency matrix $M$ of an oriented graph $D$ on $n$ vertices is the $n$ by $n$ matrix with rows and columns indexed by $V(D)$ and with entries described by

\begin{displaymath} 
M_{xy} = \left\{ \begin{array}{lll} 
1 & &\textrm{if $\vec{xy}\in E(D)$}\\ 
-1 & &\textrm{if $\vec{yx}\in E(D)$}\\ 
0 & &\textrm{otherwise} 
\end{array} \right. 
\end{displaymath} 

We denote by $\la_{1},\dots ,\la_{n}$ the eigenvalues of the adjacency matrix $M$ of $D$, in the order of decreasing absolute value.  Since $M$ is anti-symmetric and has real-valued entries, all its eigenvalues are imaginary.

For $\nu\in (0,1)$ and $A,B\subset V(D)$, say that $(A,B)$ is $\nu$-biased if $\vec{e}(B,A)\le \nu \vec{e}(A,B)$.  Define $bias_{\nu}(D)$ to be the maximum value of $\vec{e}(A,B)$ for a $\nu$-biased pair $(A,B)$, i.e., 
$$bias_{\nu}(D)=\max\big\{\vec{e}(A,B):\, A,B\subset V \text{ are such that } \vec{e}(B,A)\le \nu \vec{e}(A,B)\big\}\, .$$
These parameters of oriented graphs were studied recently in \cite{AGH}.

\section{Orientations of four-cycles}\label{oofc}

In this section we establish some very elementary inequalities that hold for all oriented graphs $D$.  The main result we establish states, in a sense, that cycles of Type IV can never be under-represented and cycles of Type II can never be over-represented in any oriented graph $D$.

\begin{proposition} \label{propIIIV} The following inequalities hold for all oriented graphs $D$: 

\begin{tabular}{l p{0.5cm} l}

(i) & & $\hom_{IV}(D)\ge \frac{1}{8}\hom(C_{4},D)$\\
(ii) & & $\hom_{II}(D)\le \frac{1}{2}\hom(C_{4},D)$

\end{tabular}
\end{proposition}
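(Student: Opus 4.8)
\medskip
\noindent\textbf{Proof idea.}
Write $N$ for the adjacency matrix of $\bar D$, so that $N_{xy}=M_{xy}^{2}$, and note that $\hom(C_4,D)=\operatorname{tr}(N^{4})=\sum_{x,x'}|\Gamma(x)\cap\Gamma(x')|^{2}$. Both parts follow by re-expressing the relevant counts in terms of $M$ and $N$ and then using one positivity fact apiece.

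For (ii) I would work with the left-hand side of P5, $S:=\sum_{x,x',y,y'}M_{xy}M_{xy'}M_{x'y}M_{x'y'}$. Grouping the factors involving $y$ and those involving $y'$ gives $S=\sum_{x,x'}\big(\sum_{y}M_{xy}M_{x'y}\big)^{2}\ge 0$. On the other hand, relabelling $(x,y,x',y')=(v_1,v_2,v_3,v_4)$ and using anti-symmetry of $M$, the summand becomes $M_{v_1v_2}M_{v_2v_3}M_{v_3v_4}M_{v_4v_1}$, which, on a $4$-tuple spanning a homomorphic copy of $C_4$, equals $(-1)^{b}$ where $b$ is the number of the four edges oriented against the traversal $v_1\to v_2\to v_3\to v_4\to v_1$. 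A direct check of the sixteen orientations shows that $b$ is odd precisely for the Type II orientations, so the summand is $-1$ on Type II copies and $+1$ on all others; hence $S=\hom_{I}(D)+\hom_{III}(D)+\hom_{IV}(D)-\hom_{II}(D)$, which by~(\ref{homup}) equals $\hom(C_4,D)-2\hom_{II}(D)$. Combined with $S\ge 0$ this is (ii).

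For (i) I would count Type IV copies directly: such a copy is given by an ordered pair $(x,x')$ together with a choice of $y,y'\in\Gamma^{+}(x)\cap\Gamma^{+}(x')$ (for one of the two Type IV orientations of $\{12,23,34,41\}$) or $y,y'\in\Gamma^{-}(x)\cap\Gamma^{-}(x')$ (for the other), so $\hom_{IV}(D)=\sum_{x,x'}\big(d^{++}(x,x')^{2}+d^{--}(x,x')^{2}\big)$. Since $d^{++}(x,x')+d^{--}(x,x')=\tfrac12\big(|\Gamma(x)\cap\Gamma(x')|+\sum_{y}M_{xy}M_{x'y}\big)=\tfrac12\big((N^{2})_{x,x'}+(MM^{T})_{x,x'}\big)$, the elementary inequality $s^{2}+t^{2}\ge\tfrac12(s+t)^{2}$ yields
\[
\hom_{IV}(D)\ \ge\ \tfrac12\sum_{x,x'}\big(d^{++}(x,x')+d^{--}(x,x')\big)^{2}\ =\ \tfrac18\big\|N^{2}+MM^{T}\big\|_{F}^{2}.
\]
Now $\|N^{2}+MM^{T}\|_{F}^{2}=\|N^{2}\|_{F}^{2}+2\operatorname{tr}(N^{2}MM^{T})+\|MM^{T}\|_{F}^{2}\ge\|N^{2}\|_{F}^{2}=\hom(C_4,D)$, the cross term being nonnegative because $\operatorname{tr}(N^{2}MM^{T})=\operatorname{tr}\big((NM)(NM)^{T}\big)=\|NM\|_{F}^{2}$. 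This is (i).

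Most of this is bookkeeping: the two homomorphism-count identities and the sign computation over the sixteen orientations. (Degenerate homomorphisms --- tuples with repeated coordinates --- cause no trouble; they merely contribute joint degrees evaluated at a repeated argument, e.g.\ $d^{++}(x,x)=d^{+}(x)$.) The one point I would be careful about is that (i) is \emph{not} a pointwise inequality: the estimate $|\Gamma(x)\cap\Gamma(x')|^{2}\le 8\big(d^{++}(x,x')^{2}+d^{--}(x,x')^{2}\big)$ fails for some pairs $(x,x')$, so the square must be expanded and summed over all pairs at once, and it is precisely the nonnegativity of $\operatorname{tr}(N^{2}MM^{T})$ --- the trace of a product of two positive-semidefinite matrices, equal to $\|NM\|_{F}^{2}$ --- that makes the global inequality go through.
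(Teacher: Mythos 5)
Your proof is correct. Part (ii) is essentially the paper's own argument in matrix clothing: your identity $S=\sum_{x,x'}\big(\sum_y M_{xy}M_{x'y}\big)^2\ge0$ is exactly the paper's $\hom(C_4,D)-2\hom_{II}(D)=\sum_{x,x'}\big(a(x,x')-b(x,x')\big)^2\ge0$, since $\sum_y M_{xy}M_{x'y}=a(x,x')-b(x,x')$ where $a=d^{++}+d^{--}$, $b=d^{+-}+d^{-+}$; and the sign computation over the sixteen orientations is the content of the paper's \textup{P2} $\Leftrightarrow$ \textup{P5} step.

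Part (i) is where you genuinely diverge from the paper. The paper proves (i) by establishing, via the joint-degree formulas (\ref{homI})--(\ref{homIV}) and pointwise applications of $(s-t)^2\ge0$, a chain of inequalities between the four cycle-type counts (Lemma~\ref{lem}: $2\hom_{IV}\ge\hom_{III}$, $\hom_{IV}\ge\hom_{I}$, $4\hom_{IV}\ge\hom_{II}$) and summing them; the one genuinely global fact is concealed in the double-counting identity equating (\ref{homIII1}) and (\ref{homIII2}). You instead go through $\hom_{IV}\ge\tfrac12\sum_{x,x'}a(x,x')^2=\tfrac18\|N^2+MM^T\|_F^2$ and isolate the global step explicitly as $\operatorname{tr}(N^2MM^T)=\|NM\|_F^2\ge0$. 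These two global facts are in fact equivalent --- expanding $\operatorname{tr}(N^2MM^T)=\sum_{x,x'}\big(a^2-b^2\big)\ge0$ and rewriting via the two expressions for $\hom_{III}$ recovers exactly Lemma~\ref{lem}(iv) --- but your organisation is tidier: it replaces the separate bookkeeping of Types I, II, III by a single Frobenius-norm expansion and the observation that $N^2$ and $MM^T$ are both positive semidefinite. Your closing remark that the inequality is not pointwise in $(x,x')$, and that the nonnegativity of the cross term is precisely what rescues it, is exactly the right thing to flag; the paper's version of that same subtlety is the identity $4\sum d^{++}d^{--}=2\sum\big((d^{+-})^2+(d^{-+})^2\big)$, which is likewise a global statement.
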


The proposition will follow easily from the following lemma that relates the counts of various types of oriented four-cycles in an oriented graph $D$.

\begin{lemma}\label{lem} The following inequalities hold for all oriented graphs $D$: 

\begin{tabular}{l p{0.1cm} l}

(i) & & $2\, \hom_{IV}(D)\ge \hom_{III}(D)$ \\
(ii) & & $\hom_{III}(D)\ge 2\, \hom_{I}(D)$ \\
(iii) & & $2\, \hom_{IV}(D)+\hom_{III}(D)\ge \hom_{II}(D)$\\
(iv) & & $\hom_{IV}(D)\ge  \hom_{I}(D)$ \\
(v) & & $4\, \hom_{IV}(D) \ge \hom_{II}(D)$\\ 

\end{tabular}
\end{lemma}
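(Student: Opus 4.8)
The plan is to fix an oriented graph $D$ with adjacency matrix $M$, and to express each of the counts $\hom_I(D),\dots,\hom_{IV}(D)$ in terms of the joint-degree quantities $d^{\sigma\tau}(x,y)$ introduced in Section~\ref{notation}. The key observation is that a homomorphic copy of an oriented four-cycle is determined by a pair of (ordered, not necessarily distinct) ``end'' vertices $x,x'$ together with a pair of ``middle'' vertices $y,y'$, each of which must lie in the appropriate oriented neighbourhood of both $x$ and $x'$; summing over the middle vertices collapses each count to a sum over pairs $(x,x')$ of products of two joint degrees $d^{\sigma\tau}(x,x')$. Carrying this out for each type, one should obtain identities of the shape
\begin{align*}
\hom_{IV}(D) &= \tfrac12\sum_{x,x'}\big[d^{++}(x,x')d^{--}(x,x') + d^{+-}(x,x')d^{-+}(x,x')\big],\\
\hom_{III}(D) &= \sum_{x,x'}\big[d^{++}(x,x')d^{+-}(x,x') + \text{three analogous terms}\big],\\
\hom_{II}(D) &= \sum_{x,x'}\big[d^{++}(x,x')d^{+-}(x,x') + \cdots\big]\ +\ \big(\text{further products}\big),\\
\hom_{I}(D) &= \tfrac12\sum_{x,x'}\big[d^{++}(x,x')^2 + d^{--}(x,x')^2\big]
\end{align*}
(the exact constants and which products appear in each line to be pinned down from the diagrams on page~\pageref{diag}; the factors of $\tfrac12$ reflect the symmetry of Types I and IV under reversing the cycle). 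I would first carefully set up this dictionary, since every inequality in the lemma then reduces to an inequality between sums of products of the four nonnegative quantities $a=d^{++}(x,x')$, $b=d^{--}(x,x')$, $c=d^{+-}(x,x')$, $d=d^{-+}(x,x')$, applied termwise in $(x,x')$.

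Once the dictionary is in place, each part follows from an elementary pointwise inequality. For (ii), $\hom_{III}(D)\ge 2\hom_I(D)$ should come from $2(ac+ad+bc+bd)\ge a^2+b^2$ — actually from AM--GM applied cleverly, e.g. $ac+bd+ad+bc=(a+b)(c+d)$ and one uses $c+d = d(x,x') - $ something, so more likely the right route is a direct AM--GM such as $2ac \ge$ nothing useful; I would instead look for the inequality $d^{+-}+d^{-+}\ge$ comparison via the handshake-type identity $d^{++}+d^{+-}=d^+(x)$, $d^{-+}+d^{--}=d^-(x)$ summed appropriately, which lets one relate $\sum(a+b)$ to $\sum(c+d)$ and hence rerun AM--GM. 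For (i) and (v), the governing pointwise facts are $2(ab+cd)\ge$ each single product like $ac$ (since $2ab\ge$ is false in general, so one actually needs $ab+cd\ge$ combined with the symmetric partner, e.g. $2(ab+cd)\ge ac+bd+ad+bc=(a+b)(c+d)$ fails too, so the correct elementary lemma is $(a-b)^2\ge 0$ type rearrangements); I expect (iv) to be immediate from (ii) and the chain $2\hom_{IV}\ge\hom_{III}\ge 2\hom_I$, and (v) to follow from (iii) together with (i), namely $\hom_{II}\le 2\hom_{IV}+\hom_{III}\le 2\hom_{IV}+2\hom_{IV}=4\hom_{IV}$. So parts (iv) and (v) are essentially free once (i), (ii), (iii) are established.

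The main obstacle, then, is getting the combinatorial dictionary exactly right — in particular making sure each of the $16$ oriented four-cycles (grouped into the four types with multiplicities $2,8,4,2$) contributes the correct product of joint degrees with the correct multiplicity, so that the pointwise inequalities genuinely imply (i)--(iii). The subtle points are (a) the diagonal terms $x=x'$ and $y=y'$, which contribute lower-order ($O(n^2)$ and $O(n^3)$) terms but must be tracked to confirm the identities hold exactly rather than asymptotically — here they actually cause no trouble because $\hom$ counts all homomorphisms including degenerate ones, so the identities are exact; and (b) the bookkeeping of orientation-reversal symmetry, i.e. why Types I and IV carry a factor $\tfrac12$ while II and III do not. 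I would handle (b) by noting that the map $(x,x',y,y')\mapsto(x',x,y,y')$ (or the analogous swap of the middle pair) acts on the set of homomorphic copies of a given oriented $C_4$, and counting orbits. With the dictionary verified, I would then simply state the three pointwise inequalities in $a,b,c,d\ge 0$, verify each by expanding a square or by AM--GM, sum over all pairs $(x,x')$, and deduce (i), (ii), (iii), and finally (iv), (v) by the short chains above. None of the calculations is deep; the only place an error is likely is in the initial translation step, so that is where I would spend the most care.
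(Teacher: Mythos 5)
Your high-level plan is exactly the paper's: express each $\hom_{\bullet}(D)$ as a sum over pairs $(x,x')$ of products of the joint degrees $d^{\sigma\tau}(x,x')$, prove (i)--(iii) by pointwise elementary inequalities of the form $2st \le s^2+t^2$, and then get (iv) and (v) for free from the chains $2\hom_{IV}\ge\hom_{III}\ge 2\hom_I$ and $4\hom_{IV}\ge 2\hom_{IV}+\hom_{III}\ge\hom_{II}$. That part is correct and is precisely how the paper proceeds.

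However, there is a genuine gap: the tentative dictionary you write down is wrong in a way that derails everything downstream. You have (up to constants) assigned the Type IV formula to Type I and vice versa. Writing $a=d^{++},\ b=d^{--},\ c=d^{+-},\ d=d^{-+}$, the correct identities (counting homomorphisms of each labelled orientation via the two ``opposite'' vertices of the cycle, exactly as you describe) are
\begin{equation*}
\hom_I=2\sum_{x,x'}cd,\qquad \hom_{III}=4\sum_{x,x'}ab=2\sum_{x,x'}(c^2+d^2),\qquad \hom_{IV}=\sum_{x,x'}(a^2+b^2),
\end{equation*}
\begin{equation*}
\hom_{II}=2\sum_{x,x'}(a+b)(c+d).
\end{equation*}
Your sketched $\hom_I=\tfrac12\sum(a^2+b^2)$ and $\hom_{IV}=\tfrac12\sum(ab+cd)$ swap the roles of I and IV and have the wrong constants. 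Because you started from these, your attempts at (i) and (ii) wander through candidate inequalities like $2(ac+ad+bc+bd)\ge a^2+b^2$ and $2(ab+cd)\ge(a+b)(c+d)$, which (as you yourself note) are false in general, and you never land on a valid termwise inequality. With the correct dictionary, the whole thing collapses: (i) is $a^2+b^2\ge 2ab$ applied to $\hom_{IV}$ versus the first form of $\hom_{III}$; (ii) is $c^2+d^2\ge 2cd$ applied to the second form of $\hom_{III}$ versus $\hom_I$; and (iii) is $a^2+b^2+c^2+d^2\ge(a+b)(c+d)$, which follows from four applications of $2st\le s^2+t^2$. The other thing worth noting is that the two displayed forms of $\hom_{III}$ (one in $ab$, one in $c^2+d^2$) are both needed — one for (i), the other for (ii) — a small but essential point your sketch did not anticipate.
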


The results of the lemma are not deep, requiring only the fact that $(s-t)^{2}$ is non-negative for any $s,t\in\mathbb{R}$.  To reduce the proofs to such trivialities we must express the quantities $\hom_{\, \bullet}(D)$ in a suitable form.  

We begin with $\hom_{I}(D)$.  Recall that $\hom_{I}(D)$ counts the number of homomorphic copies in $D$ of the two oriented graph $\{\vec{12},\vec{23},\vec{34},\vec{41}\}$ and $\{\vec{14},\vec{43},\vec{32},\vec{21}\}$.  A homomorphic copy of $\{\vec{12},\vec{23},\vec{34},\vec{41}\}$ consists of $x_{1},x_{2},x_{3},x_{4}\in V(D)$ such that $\vec{x_{1}x_{2}},\vec{x_{2}x_{3}},\vec{x_{3}x_{4}},\vec{x_{4}x_{1}}\in E(D)$.  Thus, for each pair $x_{1},x_{3}\in V(D)$ the number of homomorphic copies in which they play the roles of first and third vertex is exactly $d^{+-}(x_{1},x_{3})d^{-+}(x_{1},x_{3})$.  A similar argument gives that $\hom(\{\vec{14},\vec{43},\vec{32},\vec{21}\},D)=\sum_{x_{1},x_{3}\in V}d^{-+}(x_{1},x_{3})d^{+-}(x_{1},x_{3})$.  Renaming $x_{1}$ as $x$ and $x_{3}$ as $x'$, we obtain \begin{equation}\label{homI} \hom_{I}(D)=2\sum_{x,x'\in V}d^{+-}(x,x')d^{-+}(x,x') \end{equation}   By following similar reasoning \vspace{-0.3cm}

\begin{eqnarray} \label{homII}
\hom_{II}(D)& = & 2 \sum_{x,x'\in V}\big( d^{++}(x,x')+d^{--}(x,x')\big ) \big( d^{+-}(x,x')+d^{-+}(x,x') \big) \vspace{0.3cm}\qquad \\ \label{homIII1}
\hom_{III}(D) & = & 4\sum_{x,x'\in V}d^{++}(x,x')d^{--}(x,x') \vspace{0.1cm} \\ \label{homIII2} & = & 2 \sum_{x,x'\in V}d^{+-}(x,x')^{2}+d^{-+}(x,x')^{2} \vspace{0.3cm} \\ \label{homIV}
\hom_{IV}(D) & = & \sum_{x,x'\in V}d^{++}(x,x')^{2}+d^{--}(x,x')^{2}  \vspace{0.3cm} 
\end{eqnarray}

The proof of the lemma is now trivial using the inequality $(s-t)^{2}\ge 0$.  In the proof we simply mention which expressions one is required to use.

\begin{proof}[Proof of Lemma \ref{lem}] (i) Use (\ref{homIII1}) and (\ref{homIV}).

(ii) Use (\ref{homI}) and (\ref{homIII2}).

(iii)  Use (\ref{homII}), (\ref{homIII2}) and (\ref{homIV}).

(iv) follows from (i) and (ii).

(v) follows from (i) and (iii).
\end{proof}

\begin{proof}[Proof of Proposition~\ref{propIIIV}] (i) From parts (i), (iv) and (v) of Lemma \ref{lem} we have that $7\hom_{IV}(D)\ge \hom_{I}(D)+\hom_{II}(D)+\hom_{III}(D)$.  Combining this with the expression (\ref{homup}) for $\hom(C_{4},D)$ we deduce that $8\hom_{IV}\ge \hom(C_{4},D)$, as required.

(ii) Writing $a(x,x')$ for $d^{++}(x,x')+d^{--}(x,x')$ and $b(x,x')$ for $d^{+-}(x,x')+d^{-+}(x,x')$, our expression for $\hom_{II}(D)$ becomes \begin{equation*} \hom_{II}(D)=2 \sum_{x,x'\in V}a(x,x')b(x,x') \end{equation*} While $\hom(C_{4},D)$, the number of homomorphic copies of all four-cycles whatever their orientation, may be expressed as \begin{equation*} \hom(C_{4},D)= \sum_{x,x'\in V}\big(a(x,x')+b(x,x') \big)^{2} \end{equation*} It is now easily observed that $\hom(C_{4},D)-2\hom_{II}(D)\ge 0$. \end{proof} 

The fact that Type IV cycles are never under-represented, and Type II cycles are never over-represented is strongly linked to their utility in quasi-randomness conditions.  Indeed, the fact that Type I and Type III cycles are over-represented in some oriented graphs (eg. in blow ups of a Type 1 cycle) and under-represented in others (eg. in blow ups of an arc), makes counts of these types of cycle unsuitable as quasi-randomness conditions.

\section{Proof of Theorem~\ref{equiv}}\label{secproof}

We write $P i \Rightarrow P j$ if $P i$ implies $P_j$ in the sense required by Theorem~\ref{equiv}.  We prove Theorem~\ref{equiv} by proving the following implications:

\vspace{0.3cm}\hspace{3cm}\xymatrix{ \textup{P1} \ar@{=>}[r] &  \textup{P2} \ar@{<=>}[r] & \textup{P5} \ar@{<=>}[r] & \textup{P8} \ar@{<=>}[r] & \textup{P9} \\ \textup{P3} \ar@{=>}[u] & \textup{P4} \ar@{=>}[l] &  \textup{P6} \ar@{=>}[l] \ar@{<=>}[r] \ar@{<=}[u] & \textup{P7}  } \vspace{0.6cm}

We remark that P2, P5 and P8 are equivalent in the stronger sense that they count exactly the same thing.  So the proofs that P2 $\Leftrightarrow$ P5 and P5 $\Leftrightarrow$ P8 simply rely on observing this fact.

Perhaps the most interesting aspect of Theorem~\ref{equiv} is that properties such as P1 and P2 (close to correct homomorphism counts of certain orientation of a four cycle) imply properties such as P3 and P4 (close to correct homomorphism counts of all small oriented (or partially oriented) graphs).  Since P4 includes P3, P2 is easily deduced from P1 and P2 is equivalent to P5, the real work is in proving the implications P5 $\Rightarrow$ P6 and P6 $\Rightarrow$ P4.  The first implication (P5 $\Rightarrow$ P6) relies on two applications of the Cauchy-Schwarz inequality.  The second implication (P6 $\Rightarrow$ P4), which states that if between any two large sets $A,B\subset V(D)$ the number of arcs in each direction is roughly equal then $D$ contains a close to correct homomorphism count of all small partially oriented graphs $H$, is proved inductively on the number of edges of $H$ that are oriented.  The base case, where no edges are oriented, is trivial, while the induction step relies on showing that approximately half of all homomorphic copies of $H'$ (which is defined by forgetting the orientation of one oriented edge of $H$) are in fact also homomorphic copies of $H$.

\subsection*{P1 $\Rightarrow$ P2}

It follows easily from Proposition~\ref{propIIIV} and parts (i) and (iv) of Lemma~\ref{lem} that P1 with parameter $\alpha$ implies $P2$ with $\delta=8\alpha$.

\subsection*{P2 $\Leftrightarrow$ P5}

Define the set $C\subset V^{4}$ as follows:
$$ C=\{(x,x',y,y'):|M_{xy}M_{xy'}M_{x'y}M_{x'y'}|=1\}\, .$$
Define a partition $C=C^{+}\cup C^{-}$ by including in $C^{+}$ the quadruples for which $M_{xy}M_{xy'}M_{x'y}M_{x'y'}$ takes the value $+1$.  It is easily verified that 
\begin{eqnarray*}  &\hom(C_{4},D)& =|C|=|C^{+}|+|C^{-}|\\
& \hom_{II}(D) & =|C^{-}|\\
\text{and}\quad & \sum_{x,x',y,y'}M_{xy}M_{xy'}M_{x'y}M_{x'y'}& =|C^{+}|-|C^{-}|\, .\end{eqnarray*}
The equivalence of the conditions P2 and P5 now follows immediately from the observation that  
\begin{equation*} \hom(C_{4},D)-2\hom_{II}(D)= \sum_{x,x',y,y'}M_{xy}M_{xy'}M_{x'y}M_{x'y'} \, .\end{equation*}

\subsection*{P5 $\Leftrightarrow$ P8} 

The sum of the fourth powers of the eigenvalues of a matrix is equal to the trace of the fourth power of that matrix.  Thus,
\begin{eqnarray*}  \sum_{i=1}^{n}\la_{i}^{4} & = & trace(M^{4})\\ 
& = &\sum_{x,x',y,y'\in V} M_{xy}M_{yx'}M_{x'y'}M_{y'x} \\
& = & \sum_{x,x',y,y'\in V} M_{xy}M_{xy'}M_{x'y}M_{x'y'}  \, .\end{eqnarray*} 

\subsection*{P8$\Leftrightarrow$ P9}

It is trivial to deduce P9 from P8.  Indeed, if we assume P8($\delta$), then $\la_{1}^{4}\le \sum_{i=1}^{n}\la_{i}^{4}\le \delta n^{4}$ and so $\la_{1}\le \delta^{1/4} n$.  

On the other hand, if $\la_{1}\le \zeta n$ then, using the fact that the eigenvalues $\lambda_i$ are imaginary, we obtain the bound 
$$\sum_{i=1}^{n}|\lambda_{i}^{2}|=-\sum_{i=1}^{n}\lambda_{i}^{2}=-trace{M^{2}}=2e(D)\le n^{2}\, .$$
Thus
$$\sum_{i=1}^{n}\lambda_{i}^{4}\le |\lambda_{1}^{2}|\sum_{i=1}^{n}|\lambda_{i}^{2}|\le \zeta^{2}n^{4}\, .$$

\subsection*{P5 $\Rightarrow$ P6} 

We assume P5($\delta$).  Let $A,B$ be subsets of $V$ and note that 
$$\vec{e}(A,B)-\vec{e}(B,A)=\sum_{x\in A}\sum_{y\in B}M_{xy}\, .$$  
We bound this quantity using two applications of the Cauchy-Schwarz inequality, for this reason we shall in fact bound $(\vec{e}(A,B)-\vec{e}(B,A))^{4}$.  We write \emph{by C-S} to mark each application of the Cauchy-Scwartz inequality, on these occasions we also use that $|A|,|B|\le n$.

\begin{displaymath} 
\begin{array}{r l l r} 
\big(\vec{e}(A,B)-\vec{e}(B,A)\big)^{4} = & \Bigg(\Big(\sum_{x\in A}\sum_{y\in B}M_{xy}\Big)^{2}\Bigg)^{2} & & \vspace{0.4cm} \\
\le & \Bigg(n \sum_{x\in A}\Big(\sum_{y\in B}M_{xy}\Big)^{2}\Bigg)^{2} & \phantom{phantom}& \text{by C-S}  \vspace{0.4cm} \\
\le & n^{2} \Bigg( \sum_{x\in V} \Big(\sum_{y\in B}M_{xy}\Big)^{2}\Bigg)^{2} & & \vspace{0.4cm} \\
= & n^{2} \Bigg( \sum_{x\in V}\sum_{y,y'\in B}M_{xy}M_{xy'}\Bigg)^{2} & & \vspace{0.4cm} \\
= & n^{2} \Bigg(\sum_{y,y'\in B}\sum_{x\in V}M_{xy}M_{xy'}\Bigg)^{2} & & \vspace{0.4cm} \\
\le & n^{2} n^{2} \sum_{y,y'\in B}\Big(\sum_{x\in V}M_{xy}M_{xy'}\Big)^{2} & &\text{by C-S} \vspace{0.4cm} \\
\le & n^{4} \sum_{y,y'\in V}\Big(\sum_{x\in V}M_{xy}M_{xy'}\Big)^{2} & & \vspace{0.4cm} \\
= & n^{4} \sum_{y,y'\in V}\sum_{x,x'\in V} M_{xy}M_{xy'}M_{x'y}M_{x'y'} & & \vspace{0.4cm} \\
\le & \delta n^{8} \, .
\end{array}
\end{displaymath}
Thus $\vec{e}(A,B)-\vec{e}(B,A)\le \delta^{1/4}n^{2}$.  Hence P5($\delta$) implies P6($\gamma$) with $\gamma=\delta^{1/4}$.

\subsection*{P6 $\Rightarrow$ P4}  Our proof is by induction, for this reason we must state formally the result we shall prove.  Note that the proposition does indeed prove that P6($\gamma$) implies P4($\eta$) for $\eta=\gamma$.

\begin{proposition} Let $D$ be an oriented graph on $n$ vertices satisfying P6($\gamma$).  Let $H$ be a partially oriented graph on $k$ vertices.  Then \begin{equation*}\Bigg | hom(H,D)- \frac{hom(\bar{H},D)}{2^{\vec{e}(H)}}\Bigg | \le (1-2^{-\vec{e}(H)})\gamma n^{k}\, .\end{equation*}\end{proposition}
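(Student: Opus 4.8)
The plan is to induct on $\vec{e}(H)$, the number of oriented edges of $H$. The base case $\vec{e}(H)=0$ is trivial: then $H=\bar H$ and $\hom(H,D)=\hom(\bar H,D)$, so the left-hand side is $0$. For the induction step, suppose the statement holds for all partially oriented graphs with fewer oriented edges, and let $H$ have $\vec{e}(H)=m\ge 1$. Pick one oriented edge $\vec{uv}$ of $H$ and let $H'$ be the partially oriented graph obtained from $H$ by forgetting the orientation of that edge (so $uv$ is now an unoriented edge of $H'$), while $H''$ denotes the partially oriented graph obtained from $H$ by deleting that edge entirely. Note $\vec{e}(H')=m-1$, $\overline{H'}=\bar H$, and $\overline{H''}=\overline{H}\setminus\{uv\}$. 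The key identity is that every homomorphic copy of $H'$ either sends $uv$ to an arc oriented $u\to v$ (these are the copies of $H$) or to an arc oriented $v\to u$; hence $\hom(H',D)=\hom(H,D)+\hom(H_{\mathrm{rev}},D)$ where $H_{\mathrm{rev}}$ reverses the chosen edge. What we really need is that $\hom(H,D)$ is close to $\tfrac12\hom(H',D)$, i.e.\ that the two orientations of $uv$ are taken roughly equally often.

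The heart of the argument is a counting step using P6. For each choice of the images of the $k-2$ vertices other than $u,v$, say an assignment witnessed by a partial homomorphism, let $A$ (resp.\ $B$) be the set of admissible images for $u$ (resp.\ for $v$) consistent with all the other constraints of $H''$; more precisely, fixing a homomorphism of $H''-\{u,v\}$ suitably, one arranges that the number of copies of $H$ extending it is governed by $\vec{e}(A,B)$ and the number of copies of $H_{\mathrm{rev}}$ by $\vec{e}(B,A)$, for appropriate vertex sets $A,B\subseteq V$ depending on that partial map. Then P6($\gamma$) gives $|\vec{e}(A,B)-\vec{e}(B,A)|\le\gamma n^2$, and summing over the at most $n^{k-2}$ choices of the remaining images yields
\begin{equation*}
\Big|\hom(H,D)-\tfrac12\hom(H',D)\Big|=\tfrac12\big|\hom(H,D)-\hom(H_{\mathrm{rev}},D)\big|\le\tfrac12\gamma n^{k}\,.
\end{equation*}
(One must be slightly careful that $A,B$ may overlap and that the edge $uv$ itself may or may not be forced to be present among the constraints of $H''$; in all cases the discrepancy between the two orientations is bounded by a sum of $\vec{e}(A,B)-\vec{e}(B,A)$ terms, each controlled by P6 applied to the relevant sets.)

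Combining the counting step with the inductive hypothesis applied to $H'$ (which has $m-1$ oriented edges and the same underlying graph $\bar H$) finishes the proof: by the triangle inequality,
\begin{align*}
\Big|\hom(H,D)-\frac{\hom(\bar H,D)}{2^{m}}\Big|
&\le \Big|\hom(H,D)-\tfrac12\hom(H',D)\Big|+\tfrac12\Big|\hom(H',D)-\frac{\hom(\overline{H'},D)}{2^{m-1}}\Big|\\
&\le \tfrac12\gamma n^{k}+\tfrac12\big(1-2^{-(m-1)}\big)\gamma n^{k}
=\big(1-2^{-m}\big)\gamma n^{k}\,,
\end{align*}
which is exactly the claimed bound. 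The main obstacle is the bookkeeping in the counting step: one has to set up the fixed partial homomorphism of $H\setminus\{u,v\}$ (or of $H''$) so that the remaining freedom in placing $u$ and $v$ is genuinely captured by a pair of sets $A,B$ to which P6 can be applied, and to handle correctly the degenerate possibilities (images of $u$ and $v$ coinciding, the presence or absence of the edge $uv$ as an additional constraint, multiple edges incident to $u$ or $v$). Once the reduction to $\vec{e}(A,B)-\vec{e}(B,A)$ is made cleanly, everything else is the routine triangle-inequality induction above.
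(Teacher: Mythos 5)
Your proof proposal is correct and takes essentially the same approach as the paper: induction on $\vec{e}(H)$, unorienting one arc to form $H'$, fixing the images of the other $k-2$ vertices to reduce the count to a pair of sets $A,B$, and invoking P6 to bound the discrepancy before the routine triangle-inequality step. The only cosmetic difference is that you bound $|\hom(H,D)-\hom(H_{\mathrm{rev}},D)|$ via $|\vec{e}(A,B)-\vec{e}(B,A)|$, whereas the paper works with $|\vec{e}(A,B)-\tfrac12 e(A,B)|$; these are identical since $e(A,B)=\vec{e}(A,B)+\vec{e}(B,A)$ for ordered-pair counts.
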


We shall use in the proof the following, equivalent, form of P6($\gamma$):
\begin{equation}\label{P6equiv} \vec{e}(B,A)\ge \frac{e(A,B)}{2} -\frac{\gamma}{2}n^{2}\qquad \text{for all}\, A,B\subset V\, .\end{equation}
   
\begin{proof} We prove the proposition by induction on $\vec{e}(H)$.  If $\vec{e}(H)=0$, then $H=\bar{H}$, and so $hom(H,D)=\hom(\bar{H},D)$.  For the general case, let $H$ be an oriented graph on $\{1,...,k\}$ with $\vec{e}(H)\ge 1$.  By relabelling if necessary (which does not affect the homomorphism count) we may assume that $\vec{12}$ is an arc of $H$.  Let $H'$ be the partially oriented graph obtained by unorienting this edge.  We now relate the quantities $\hom(H',D)$ and $\hom(H,D)$.  For each $(x_{3},...,x_{k})\in V^{k-2}$ let $\Hom(H',D;\, .\, ,\, .\, ,x_{3},...,x_{k})$ denote the set of homomorphisms $\phi$ of $H'$ into $D$ for which $\phi(i)=x_{i}$ for all $i=3,...,k$.  Similarly define $\Hom(H,D;\, .\, ,\, .\, ,x_{3},...,x_{k})$.  In fact, it is easy to characterise the homomorphisms $\phi \in \Hom(H',D;\, .\, ,\, .\, ,x_{3},...,x_{k})$.  A homomorphism $\phi \in \Hom(H',D;\, .\, ,\, .\, ,x_{3},...,x_{k})$ must have $\phi(i)=x_{i}$ for $i=3,...,k$, and must pick values for $\phi(1)$ and $\phi(2)$.  Writing $x_{1}$ for $\phi(1)$ we know $x_{1}$ must join up appropriately to the vertices $x_{3},...,x_{k}$.  Specifically

\begin{tabular}{c p{1cm} p{10cm}} 

(i) & & $\vec{x_{1}x_{i}}$ is an arc of $D$, for every arc $\vec{1i}:i\ge 3$ in $H$.
\\
(ii) & & $\vec{x_{i}x_{1}}$ is an arc of $D$, for every arc $\vec{i1}:i\ge 3$ in $H$.
\\
(iii) & & $x_{1}x_{i}$ is an edge of $\bar{D}$, for every edge $1i:i\ge 3$ in $\bar{H}$.

\end{tabular}

Equivalently, we require $x_{1}\in \bigcap_{i\ge 3:\vec{1i}\in E(H)}\Gamma^{-}(x_{i})\cap \bigcap_{i\ge 3:\vec{i1}\in E(H)}\Gamma^{+}(x_{i})\cap \bigcap_{i\ge 3:1i\in E(\bar{H})}\Gamma(x_{i})$.  We denote this set $A$.  Similarly, writing $x_{2}$ for $\phi(2)$, there are similar restrictions on $x_{2}$, which again are equivalent to demanding that $x_{2}$ belongs to a certain set, we denote this set $B$.  Since $H'$ has an unoriented edge between $1$ and $2$, we have a final condition - the condition that $x_{1}x_{2}$ is an edge of $\bar{D}$.  Hence for certain sets $A$ and $B$, we have a one-to-one correspondence between homomorphisms $\phi\in \Hom(H',D;\, .\, ,\, .\, ,x_{3},...,x_{k})$ and edges of $\bar{D}$ between $A$ and $B$.

Similarly, we may characterise the homomorphisms $\phi\in \Hom(H,D;\, .\, ,\, .\, ,x_{3},...,x_{k})$.  Again we write $x_{1}$ and $x_{2}$ for $\phi(1)$ and $\phi(2)$. The restrictions $x_{1}\in A$ and $x_{2}\in B$ remain.  However, on this occasion we require not only that there is some edge between $x_{1}$ and $x_{2}$, we require that there is an oriented edge \emph{from} $x_{1}$ \emph{to} $x_{2}$.  Thus, there is a one-to-one correspondence between homomorphisms $\phi\in \Hom(H,D;\, .\, ,\, .\, ,x_{3},...,x_{k})$ and edges from $A$ to $B$.

Thus $|\Hom(H',D;\, .\, ,\, .\, ,x_{3},...,x_{k})|$ and $|\Hom(H,D;\, .\, ,\, .\, ,x_{3},...,x_{k})|$ are $e(A,B)$ and $\vec{e}(A,B)$ respectively, for some pair of subsets $A,B\subset V$.  From our condition (\ref{P6equiv}) we obtain that \begin{equation*} \Bigg | |Hom(H,D;\, .\, ,\, .\, ,x_{3},...,x_{k})|- \frac{|Hom(H',D;\, .\, ,\, .\, ,x_{3},...,x_{k})|}{2}\Bigg |\le \frac{\gamma}{2}n^{2}\, .\end{equation*} Since $\hom(H,D)$ is the sum over $(x_{3},...,x_{k})\in V^{k-2}$ of $|\Hom(H,D;\, .\, ,\, .\, ,x_{3},...,x_{k})|$, and similarly $\hom(H',D)$, we have that
\begin{equation*} \Bigg | \hom(H,D)-  \frac{\hom(H',D)}{2}\Bigg | \le \frac{\gamma}{2}n^{k}\, .\end{equation*} 
Having obtained this relation between $\hom(H,D)$ and $\hom(H',D)$ we require only an application of the induction hypothesis.  As $\vec{e}(H')=\vec{e}(H)-1$, an application of the induction hypothesis to $H'$ gives $|\hom(H',D)- \hom(\bar{H},D)/2^{\vec{e}(H)-1}|\le (1-2^{1-\vec{e}(H)})\gamma n^{k}$.  Combining this with the inequality proved above
\begin{equation*} \Bigg | \hom(H,D)-  \frac{\hom(\bar{H},D)}{2^{\vec{e}(H)-1}}\Bigg |\le \frac{\gamma}{2}n^{k}+\frac{1}{2} (1-2^{1-\vec{e}(H)})\gamma n^{k}= (1-2^{-\vec{e}(H)})\gamma n^{k}\, .\end{equation*}
\end{proof}

\subsection*{P4 $\Rightarrow$ P3 $\Rightarrow$ P1}

It is trivial that P4 implies P3.  While P3 with parameter $\beta$ implies P1 with parameter $\alpha=2\beta$, as $\hom_{IV}(D)$ is simply the sum of the number of homomorphic copies of two particular orientations of the four-cycle.

\subsection*{P6 $\Leftrightarrow$ P7} 

Suppose that P6($\gamma$) holds and let $A,B$ be a pair with $\vec{e}(B,A)\le (1-\ep) \vec{e}(A,B)$.  Then 
\begin{equation*}  \ep \vec{e}(A,B)\le \vec{e}(A,B) - \vec{e}(B,A) \le \gamma n^{2}\end{equation*}
and so $\vec{e}(A,B)\le \ep^{-1}\gamma n^{2}$.  Thus P6($\gamma$) implies P7($\ep$) where $\ep=\gamma^{1/2}$.  

On the other hand, if we assume P7($\ep$) then $\vec{e}(A,B)-\vec{e}(B,A)\le \vec{e}(A,B)\le \ep n^{2}$ for all pair $A,B$ with $\vec{e}(B,A)\le (1-\ep)n^{2}$ and $\vec{e}(A,B)-\vec{e}(B,A)\le \ep \vec{e}(A,B)\le \ep n^{2}$ for all other pairs.  Thus P7($\ep$) implies P6($\gamma$) where $\gamma=\ep$.

\end{document}